\newcommand{\Gr}{\mathrm{Gr}}
\newcommand{\Irr}{\operatorname{Irr}}
\newcommand{\IC}{\mathrm{IC}}
\newcommand{\CS}{\mathcal{CS}}
\newcommand{\Crys}{\mathcal{C}rys}
\newcommand{\cC}{\mathcal{C}}
\newcommand{\Hom}{\operatorname{Hom}}
\def\O{\mathcal{O}}
\def\K{\mathcal{K}}
\def\cD{\mathcal{D}}
\def\C{\mathbb{C}}
\def\ttt{\tilde{\times}}
\newtheorem{Theorem}{Theorem}[section]
\newtheorem{Proposition}[Theorem]{Proposition}
\title{A combinatorial geometric Satake equivalence}
\author{Joel Kamnitzer}
\email{jkamnitz@math.toronto.edu}
\address{Department of Mathematics\\ University of Toronto \\ Toronto, ON Canada}
\begin{document}
\begin{abstract}
The geometric Satake correspondence provides an equivalence of categories between the Satake category of spherical perverse sheaves on the affine Grassmannian and the category of representations of the dual group.  In this note, we define a combinatorial version of the Satake category using irreducible components of fibres of the convolution morphism.  We then prove an equivalence of coboundary categories between this combinatorial Satake category and the category of crystals of the dual group.
\end{abstract}

\maketitle

\section{Introduction}
Let $ G $ be a complex reductive group and let $ G^\vee $ be its Langlands dual group.

The geometric Satake equivalence of Lusztig \cite{L}, Ginzburg \cite{G}, and Mirkovic-Vilonen \cite{MV} provides a description of the representation theory of $G^\vee $ in terms of the topology of the affine Grassmannian $ \Gr = G((t))/G[[t]] $ of $ G $.  More precisely, the above authors defined a symmetric monoidal category of $ G[[t]]$-equivariant perverse sheaves on $ \Gr $ (known as the Satake category) and then proved that the Satake category is equivalent to the category of representations of $ G^\vee $.

In this paper, we define a combinatorial version of the Satake category and then prove that it is equivalent to the category of $ G^\vee $-crystals.

\subsection{The combinatorial Satake category}
To explain our combinatorial Satake category, let us recall that the usual Satake category is a semisimple category whose simple objects are the IC sheaves $ \IC(\Gr^\lambda) $ of spherical Schubert varieties.  The monoidal structure is defined by convolution and a standard computation shows that $$ \IC(\Gr^\lambda) \otimes \IC(\Gr^\mu) \cong \bigoplus_{\nu} IC(\Gr^\nu) \otimes H_{2\langle \lambda + \mu - \nu, \rho \rangle}(m^{-1}(t^\nu)), $$
where $ m : \Gr^\lambda \ttt \Gr^\mu \rightarrow \Gr $ is the convolution morphism.  The vector space $H_{2\langle \lambda + \mu - \nu, \rho \rangle}(m^{-1}(t^\nu)) $ has a natural basis consisting of the set $C_{\lambda \mu}^\nu $ of top-dimensional irreducible components of $ m^{-1}(t^\nu) $.

Thus, we combinatorialize the Satake category by defining a semisimple monoidal category $ \CS $ where the tensor product is defined using the sets $C_{\lambda \mu}^\nu $.  We then equip this category with associativity and commutativity constraints.  For associativity, we use iterated convolutions, and for commutativity, we use a certain automorphism of $ G $, inspired by an idea of Beilinson-Drinfeld \cite{BD}.

\subsection{The equivalence with crystals}
Having defined this combinatorial version of the Satake category, it is natural to compare it to the category of $G^\vee$-crystals, which is a combinatorial version of the representation category of $G^\vee$.  We are able to prove an equivalence between these two categories using the work of Braverman-Gaitsgory \cite{BG}.

\begin{Theorem} \label{th:main}
There is an equivalence of coboundary categories $ \CS \cong G^\vee$-$\Crys $.
\end{Theorem}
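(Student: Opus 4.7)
The plan is to construct an explicit functor $F : \CS \to G^\vee\text{-}\Crys$ sending the simple object corresponding to $\Gr^\lambda$ to the irreducible crystal $B(\lambda)$, and to verify it is an equivalence of coboundary categories. Since both categories are semisimple with simple objects indexed by dominant coweights of $G$, the functor is determined at the level of objects by this assignment, and is clearly essentially surjective and bijective on isomorphism classes of simples.

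The central input is Braverman-Gaitsgory's crystal structure \cite{BG} on the set $\bigsqcup_\nu C_{\lambda\mu}^\nu$ of top-dimensional components of fibres of the convolution morphism. They prove that this set is naturally isomorphic, as a $G^\vee$-crystal, to the tensor product $B(\lambda) \otimes B(\mu)$, with the piece $C_{\lambda\mu}^\nu$ sitting in weight $\nu$ and its elements matching the highest-weight vectors that parametrise the $B(\nu)$-isotypic summands. This yields a canonical bijection between the basis $C_{\lambda\mu}^\nu$ of $\Hom_{\CS}(L_\nu, L_\lambda \otimes L_\mu)$ and the corresponding crystal Hom space, which supplies the tensor structure data for $F$.

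To match associators, I would pass to iterated convolutions $\Gr^{\lambda_1} \ttt \cdots \ttt \Gr^{\lambda_n}$ and verify, along the lines of \cite{BG}, that the top components of $m^{-1}(t^\nu)$ form a crystal canonically isomorphic to $B(\lambda_1) \otimes \cdots \otimes B(\lambda_n)$ independently of the bracketing. Both the $\CS$-associator (defined through these iterated convolutions) and the strict crystal associator then agree with this identification, so coherence comes essentially for free.

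The main obstacle is matching the commutativity constraints. On the $\CS$ side the commutor $C_{\lambda\mu}^\nu \to C_{\mu\lambda}^\nu$ is induced by the Beilinson-Drinfeld-style automorphism of $G$ acting on $\Gr^\lambda \ttt \Gr^\mu$, while on the crystal side the Henriques-Kamnitzer cactus commutor uses the Schützenberger-Lusztig involution combined with reversal of tensor factors. To identify them, I would argue that the chosen automorphism of $G$ induces, under geometric Satake, the Cartan involution on $\operatorname{Rep} G^\vee$, and therefore, transported to the MV-cycle combinatorial model, realises precisely the Schützenberger involution built into the Henriques-Kamnitzer commutor; compatibility with the Braverman-Gaitsgory bijection then reduces to a check on a single fibre $m^{-1}(t^\nu)$. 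Once this is in place, $F$ respects all coboundary-category data, and being bijective on simples with matching multiplicities, is an equivalence.
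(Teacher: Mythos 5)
Your overall skeleton coincides with the paper's: the functor sends the simple object labelled by $\lambda$ to the Braverman--Gaitsgory crystal of MV cycles $MV(\lambda)\cong B(\lambda)$, the tensor structure comes from the BG isomorphism $MV(\lambda)\otimes MV(\mu)\cong \bigcup_\nu C_{\lambda\mu}^\nu\otimes MV(\nu)$ of \cite{BG}, and associativity is handled via iterated convolutions. (The paper does carry out that last check concretely, by comparing the two fibre-product decompositions of $(Z_1\ttt Z_2\ttt Z_3)$ over $\Gr^\delta$ and $\Gr^\gamma$; it is a short computation, but not quite ``for free'' --- your plan there is the same, just unexecuted.)

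The genuine gap is in the commutor step, which is the crux of the theorem. You assert that $\psi$ ``induces, under geometric Satake, the Cartan involution on $\operatorname{Rep} G^\vee$'' and that it ``therefore'' realises the Sch\"utzenberger involution in the MV-cycle model. Neither implication can be used as stated: geometric Satake identifies semisimple categories but says nothing a priori about crystal operators or about how $\psi$ interacts with the combinatorial model --- passing from the geometry to crystal combinatorics is precisely the nontrivial content of \cite{BG}, \cite{BFG}, \cite{BaGa}, and cannot be invoked as a formality. Moreover, the Henriques--Kamnitzer commutor of \cite{HK1} is not ``Sch\"utzenberger on each factor plus flip'': it also involves the Sch\"utzenberger involution of the tensor product crystal, whose geometric incarnation would live on the convolution variety; your sketch never addresses this, and it is exactly where the translation $t^\nu$ in the map $t^\nu\psi : m_{\lambda\mu}^{-1}(t^\nu)\rightarrow m_{\mu\lambda}^{-1}(t^\nu)$ enters. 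The paper sidesteps the Sch\"utzenberger involution entirely: it uses the Kamnitzer--Tingley description \cite{KT} of the commutor, $\sigma(b\otimes b_\mu)=b'\otimes b_\lambda$ where $\iota_\mu(b')=\iota_\lambda(b)^*$ in terms of Kashiwara's involution $*$ on $B(\infty)$, together with the known geometric facts that $*$ on $MV(\infty)$ is $Z\mapsto t^\gamma\psi(Z)$ (\cite{BFG}, \cite{BaGa}) and that $\iota_\lambda(Z)=t^{-\lambda}\overline{Z}$ (\cite{BaGa}); with these inputs the verification on a highest weight element $(Z,\{t^\mu\})$ is a two-line computation. To complete your route you would have to prove a geometric realisation of the Sch\"utzenberger involution on $MV(\lambda)$ and on convolutions --- a result of comparable depth to the cited $*$-involution theorems, not a consequence of Satake --- so as written the key compatibility is asserted rather than proved.
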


It is not immediately obvious that $\CS$ is a coboundary category, but this follows from the theorem.  The coboundary category structure of $\CS $ will be further explored in \cite{GKS}.

We should emphasize that though we think of $ G^\vee$-$\Crys$ as a combinatorial version of $ Rep \, G^\vee $, it is genuinely different.  More precisely, suppose we form $ G^\vee$-$\Crys \otimes \C $; the category where the objects are the same as $ G^\vee$-$\Crys$ and where the morphism sets have been $\C$-linearly extended.  Then $ G^\vee$-$\Crys \otimes \C $ is certainly equivalent to $ Rep \, G^\vee $ as a category, but it is not equivalent as a monoidal category, not even for $ G^\vee = SL_2$, as can be seen by considering the $6j$-symbols.  Likewise, the monoidal category $ \CS $ is genuinely different from the usual Satake category.

\subsection{Acknowledgements}
I would like to thank Xinwen Zhu for an email discussion which prompted me to complete this paper.  I would also like to thank A. Goncharov and A. Henriques for useful discussions.

\section{Background}
\subsection{Notation}

Let $ G $ denote a semisimple group.  Fix a maximal torus $ T $ and let $ N $, $N_- $ denote unipotent radicals of opposite Borels containing $ T $.  We use $ X $ to denote the coweight lattice of $ G$.  Let $X_+ $ denote the dominant coweights of $ G $.  We use $ W $ for the Weyl group of $ G $ and we use $ w_0 $ for its longest element.  We write $ \lambda^* := -w_0 \lambda $ and we write $ i^* $ if $ \alpha_i^* = \alpha_{i^*} $.

Let $ \psi : G \rightarrow G $ denote the involutive automorphism of $ G $ which is defined by $ \psi(s) = s^{-1} $ for $ s \in T $ and $ \psi(x_\alpha(a)) = x_{-\alpha}(-a) $ for all roots $ \alpha $ (here $ x_\alpha : \mathbb{G}_a \rightarrow G $ is the subgroup corresponding to the root $ \alpha$).  In other words, $ \psi(g) = (g^{t})^{-1} $.

Let $ G^\vee $ denote the Langlands dual group to $ G $.  Recall that $ X_+ $ is the set of dominant weights of $ G^\vee $.

\subsection{Affine Grassmannian}
Let $ \K = \C((t)) $, $ \O = \C[[t]] $ denote the field of Laurent series and the ring of power series.  Then the affine Grassmannian of $ G $ is defined as $ \Gr = G(\K)/G(\O) $.

For each $ \mu \in X $, we have the point $ t^\mu \in \Gr $.

We have the distance function $ d : \Gr \times \Gr \rightarrow X_+ $ whose level sets are the $ G(\K) $-orbits.  In particular $ d(1, t^\lambda) = \lambda $ if $ \lambda $ is dominant.

For $\lambda $ dominant, we let $ Gr^\lambda = G(\O)t^\lambda = \{ L : d(1, L) = \lambda \} $ be the usual spherical Schubert cell.  Recall that $ \dim \Gr^\lambda = \langle 2\lambda, \rho \rangle $.  We also let $ S^\mu := N(\K) t^\mu $ be the semi-infinite cell and let $ T^\mu := N_-(\K)t^\mu $ be the opposite semi-infinite cell.

If $ \lambda_1, \dots, \lambda_n $ are dominant weights, then we can form the convolution variety
$$
\Gr^{\lambda_1} \ttt \cdots \ttt \Gr^{\lambda_n} = \{ (L_1, \dots, L_n) : d(L_{i-1}, L_i) = \lambda_i \text{ for all } i\}
$$
where we interpret $ L_0 = 1$.

We write $ m_{\lambda_1, \dots, \lambda_n} : \Gr^{\lambda_1} \ttt \cdots \ttt \Gr^{\lambda_n} \rightarrow \Gr $ for the map $(L_1, \dots, L_n) \mapsto L_n $.

In this paper, we will be interested in the varieties
$$
m_{\lambda_1, \dots, \lambda_n}^{-1}(t^\mu), \ m_{\lambda_1, \dots, \lambda_n}^{-1}(\Gr^\mu), \ m_{\lambda_1, \dots, \lambda_n}^{-1}(T^\mu)
$$
There varieties have dimensions at most $ \langle \lambda - \mu, \rho \rangle, \langle \lambda + \mu, \rho \rangle, \langle \lambda - \mu, \rho \rangle$, respectively, where $ \lambda = \lambda_1 + \dots + \lambda_n $ and where in the first two expressions we assume that $ \mu $ is dominant.

We will write
\begin{gather*}
C_{\lambda_1 \dots \lambda_n}^\mu := \Irr m_{\lambda_1, \dots, \lambda_n}^{-1}(t^\mu),  \ \Irr m_{\lambda_1, \dots, \lambda_n}^{-1}(\Gr^\mu), \\ MV(\lambda_1, \dots, \lambda_n)_\mu := \Irr m_{\lambda_1, \dots, \lambda_n}^{-1}(T^\mu)
\end{gather*}
for the sets of irreducible components of these dimensions.

Note that $ m_{\lambda_1, \dots, \lambda_n}^{-1}(\Gr^\mu) $ is a bundle over $ \Gr^\mu $ with fibre $ m_{\lambda_1, \dots, \lambda_n}^{-1}(t^\mu)$.  Thus there is a bijection $$ C_{\lambda_1 \dots \lambda_n}^\mu \rightarrow \Irr m_{\lambda_1, \dots, \lambda_n}^{-1}(\Gr^\mu)$$ which we write as $ Z \mapsto \widetilde Z $.  Concretely, this bijection is realized by $ \widetilde Z = G(\O) Z $.

\subsection{Monoidal categories} \label{se:moncat}
Recall that a monoidal category is a category $ \cC $ along with a bifunctor $ \otimes : \cC \times \cC \rightarrow \cC$ and an isomorphism $ \alpha : \otimes \circ (\otimes \times I) \rightarrow \otimes \circ (I \times \otimes) $ which is required to satisfy the pentagon axiom.   The isomorphism $\alpha $ is called the associativity constraint or associator.

\subsubsection{Refinement of the associator}
We can repackage the data of $ \alpha $ in the following way.  First, we suppose the existence of a trifunctor $ \otimes_3 : \cC \times \cC \times \cC \rightarrow \cC $ and then we assume that we can write $ \alpha $ as a composition $ \alpha = \alpha^2 \circ (\alpha^1)^{-1}$ where $ \alpha^1 : \otimes_3 \rightarrow \otimes \circ (\otimes \times I) $ and $ \alpha^2 : \otimes_3 \rightarrow \otimes \circ (I \times \otimes) $.  In other words, for every three objects $ A, B, C$ of $ \cC $, we have an unbracketed triple tensor product $ A \otimes B \otimes C $ and isomorphisms
$$
\alpha^1_{A, B,C} : A \otimes B \otimes C \rightarrow (A \otimes B) \otimes C   \quad \alpha^2_{A, B, C} : A \otimes B \otimes C \rightarrow A \otimes (B \otimes C)
$$
with $$\alpha_{A, B, C} = \alpha^2_{A, B, C} (\alpha^1_{A, B, C})^{-1} : (A \otimes B) \otimes C \rightarrow   A \otimes (B \otimes C) $$
We will always work with monoidal categories which come equipped with this extra structure.

A monoidal functor $ \Phi : \cC \rightarrow \cD $ a functor $ \Phi $ along with a natural isomorphism $ \phi: \otimes \circ (\Phi \times \Phi)  \rightarrow \Phi \circ \otimes $ such that for any three objects $ A, B, C$ of $ \cC $, the following diagram commutes
\begin{equation*}
\xymatrix{
(\Phi(A) \otimes \Phi(B)) \otimes \Phi(C) \ar[r]^{\alpha_{\Phi(A), \Phi(B), \Phi(C)}} \ar[d]^{\phi_{A \otimes B, C} \circ (\phi_{A,B} \otimes I)} & \Phi(A) \otimes (\Phi(B) \otimes \Phi(C)) \ar[d]^{\phi_{A, B \otimes C} \circ (I \otimes \phi_{B,C})}\\
\Phi((A \otimes B) \otimes C) \ar[r]^{\Phi(\alpha_{A,B,C})} & \Phi(A \otimes (B \otimes C))
}
\end{equation*}

In the presence of the unbracketed triple tensor product $ \otimes_3 $, it is natural to ask for another natural isomorphism $ \phi : \otimes_3 \circ F \times F \times F  \rightarrow F \circ \otimes_3 $.  We demand that the following diagram commutes
\begin{equation} \label{eq:monfunc}
\xymatrix{
\Phi(A) \otimes \Phi(B) \otimes \Phi(C) \ar[r]^{\alpha^1_{\Phi(A), \Phi(B), \Phi(C)}} \ar[d]^{\phi_{A, B, C}} & (\Phi(A) \otimes \Phi(B)) \otimes \Phi(C) \ar[d]^{\phi_{A \otimes B, C} \circ (\phi_{A,B} \otimes I)}\\
\Phi(A \otimes B \otimes C) \ar[r]^{F(\alpha^1_{A,B,C})} & \Phi((A \otimes B) \otimes C)
}
\end{equation}
and an analogous one for $ \alpha^2 $.  These commutative diagrams imply the above one involving $ \alpha$.

\subsubsection{Commutativity}
A commutativity constraint (or commutor) on a monoidal category is a natural isomorphism $ \sigma : \otimes \rightarrow \otimes^{op} $, in other words for any two objects $A, B$, we have an isomorphism $$ \sigma_{A, B} : A \otimes B \rightarrow B \otimes A. $$

If $ \cC, \cD $ are monoidal categories with commutors, then a monoidal functor $ \Phi : \cC \rightarrow \cD $ is compatible with the commutors if
\begin{equation} \label{eq:commutors}
\xymatrix{
\Phi(A) \otimes \Phi(B) \ar[r]^{\sigma_{\Phi(A), \Phi(B)}} \ar[d]^{\phi_{A,B}} & \Phi(B) \otimes \Phi(A) \ar[d]^{\phi_{B,A}} \\
\Phi(A \otimes B) \ar[r]^{\Phi(\sigma_{A,B})} & \Phi(B \otimes A) \\
}
\end{equation}
commutes.

There are a number of additional axioms that one can impose on a commutor.  If $ \sigma $ is symmetric ($\sigma_{A,B} = \sigma_{B,A}^{-1} $) and satisfies a certain hexagon axiom involving the associator, then we say that $ \cC $ is a coboundary category (see \cite{HK1} for more details).

\subsection{Category of crystals}
\subsubsection{Crystals}
A crystal $ B $ is a set $ B $ equipped with maps $$ e_i : B \rightarrow B \cup \{0\}, \ f_i : B \rightarrow B \cup \{0 \}, \text{for $ i \in I$, and } \mathrm{wt} : B \rightarrow X. $$  
This data is required to satisfy certain natural axioms (see for example \cite{HK1}).

For each dominant weight $ \lambda $, there is a crystal $ B(\lambda) $, which is a combinatorial version of the representation $ V(\lambda) $.  Let $ G^\vee$-$\Crys $ denote the category of $G^\vee$-crystals, as defined in \cite{HK1}.  This category consists of all crystals isomorphic to a direct sum of crystals $ B(\lambda) $ where $ \lambda $ ranges over the dominant weights of $ G^\vee $.  A morphism of crystals is simply a map of sets which commutes will all crystal operations.

A highest weight element $ b $ of a crystal $ B $ is an element which satisfies $ e_i(b) = 0 $ for all $ i \in I $.  Note that a map of crystals $ f : B \rightarrow C $ is completely determined by its value on the highest weight elements of $ B$.  The crystal $ B(\lambda) $ has a unique highest weight element $ b_\lambda $.

\subsubsection{Tensor products}

The category $ G^\vee$-$\Crys$ has a monoidal structure, as defined by Kashiwara.  If $ B, C $ are two crystals, then we write $ B \otimes C $ for their tensor product (which is just $ B \times C $ as a set).

If $ B $ carries a trivial crystal structure (all elements are of weight $0 $), then $ B \otimes C $ is just a direct sum of different copies of $ C $ labelled by $ B $.

The associator for the category of crystals is the same as the associator for the category of sets.  Thus, it is safe to ignore it and identify $ (A \otimes B) \otimes C = A \otimes B \otimes C$.

\subsubsection{The crystal commutor}
In \cite{HK1}, we defined a commutor for the category of crystals, giving $G^\vee$-$\Crys$ the structure of a coboundary category.  For the purposes of this paper, we will not need the original definition of the commutor, but rather the following description from \cite{KT}.

Recall that we can embed (not as a morphism of crystals, but compatible with all $ e_i$) the crystal $ B(\lambda) $ into the crystal $B(\infty) $ of a Verma module.  We write this embedding as
$$ \iota_\lambda : B(\lambda) \hookrightarrow B(\infty) .$$
The crystal $ B(\infty) $ carries the Kashiwara involution $ * $.  The main result from \cite{KT} describes $ \sigma $ using $ \iota $ and $ * $.  As explained before it is enough to give the image of all highest weight elements under $ \sigma $.  The definition of the tensor product ensures that all highest weight elements in $ B(\lambda) \otimes B(\mu) $ are of the form $ b \otimes b_\mu $.
\begin{Theorem} \label{th:peter}
Let $ b \otimes b_\mu $ be a highest weight element of $ B(\lambda) \otimes B(\mu) $.  Then $ \sigma(b \otimes b_\mu) = b' \otimes b_\lambda $ where $ \iota_{\mu}(b') = \iota_\lambda(b)^*$.
\end{Theorem}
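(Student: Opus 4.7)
The plan is to reduce the formula to a statement about highest weight elements, verify well-definedness using the known characterization of $\iota_\lambda(B(\lambda)) \subseteq B(\infty)$, and match the resulting bijection with $\sigma$ via a dictionary between the Schützenberger involution on $B(\lambda)$ and the Kashiwara involution $*$ on $B(\infty)$.

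Since $\sigma$ is a crystal isomorphism, it is determined by its action on highest weight elements. By Kashiwara's tensor product rule, $b \otimes b_\mu$ is highest weight in $B(\lambda) \otimes B(\mu)$ iff $\varepsilon_i(b) \le \langle \alpha_i, \mu \rangle$ for all $i$. The image $\iota_\lambda(B(\lambda)) \subseteq B(\infty)$ is cut out by the inequalities $\varepsilon_i^*(x) \le \langle \alpha_i, \lambda \rangle$, and combining this with the identity $\varepsilon_i(x^*) = \varepsilon_i^*(x)$ and the weight-preservation of $*$, the highest weight condition on $b \otimes b_\mu$ translates exactly into $\iota_\lambda(b)^* \in \iota_\mu(B(\mu))$. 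This gives a unique $b' \in B(\mu)$; a symmetric check shows $b' \otimes b_\lambda$ is highest weight in $B(\mu) \otimes B(\lambda)$, and involutivity of $*$ makes $b \otimes b_\mu \mapsto b' \otimes b_\lambda$ a bijection of highest weight elements.

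To identify this bijection with $\sigma$, I would unwind the definition of the commutor from \cite{HK1}, which is built from the Schützenberger (Lusztig) involutions $\xi_\nu : B(\nu) \to B(\nu)$. The crux is a compatibility lemma: under $\iota_\nu$, the involution $\xi_\nu$ corresponds, up to the diagram automorphism $i \mapsto i^*$, to the Kashiwara involution $*$ on $B(\infty)$. Given this, tracking a highest weight element through the \cite{HK1} recipe recovers exactly $b' \otimes b_\lambda$ with $\iota_\mu(b') = \iota_\lambda(b)^*$. The main obstacle is precisely this compatibility lemma, since $\xi_\nu$ is intrinsic to the finite-dimensional $B(\nu)$ (and depends on $\nu$) while $*$ is intrinsic to $B(\infty)$; the cleanest route is probably the MV polytope model, where $B(\infty)$ and its highest weight quotients have a common parameterization, both $*$ and $\xi_\nu$ admit explicit polytope-level descriptions, and the lemma reduces to a direct geometric calculation.
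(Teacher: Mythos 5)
You are not competing with an argument in the paper here: the paper contains no proof of this statement at all. It is quoted as the main theorem of the cited reference \cite{KT}, and in this paper it functions purely as an input (it is used later to verify compatibility of commutors). So your proposal stands or falls on its own. Its first two steps are correct: $\sigma$ is determined by its values on highest weight elements, and since $b \otimes b_\mu$ is highest weight iff $\varepsilon_i(b) \le \langle \mu, \alpha_i^\vee \rangle$ for all $i$, Kashiwara's characterization $\iota_\lambda(B(\lambda)) = \{x \in B(\infty) : \varepsilon_i^*(x) \le \langle \lambda, \alpha_i^\vee \rangle\}$ together with $\varepsilon_i^* = \varepsilon_i \circ *$ and $** = \mathrm{id}$ shows this is equivalent to $\iota_\lambda(b)^* \in \iota_\mu(B(\mu))$. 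But that only proves the formula in the theorem defines a bijection on highest weight elements; the entire content of the theorem is identifying this bijection with the commutor of \cite{HK1}, and there your argument breaks.

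The compatibility lemma you propose as the crux is false as stated. Take $G^\vee = SL_2$: then $B(\infty)$ is a single string $x_0, x_1, x_2, \ldots$, and since Kashiwara's involution preserves weight while every weight space of $B(\infty)$ is a singleton, $*$ is the identity map; the diagram automorphism is trivial as well. Yet $\xi_\nu$ reverses the finite string $B(\nu)$, so $\iota_\nu \circ \xi_\nu$ sends $x_k \mapsto x_{n-k}$ where $n = \langle \nu, \alpha^\vee \rangle$; this depends on $\nu$ and so cannot equal $\Theta \circ \iota_\nu$ for any fixed map $\Theta$ of $B(\infty)$, let alone for $\Theta = *$ or $*$ twisted by $i \mapsto i^*$. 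The obstruction is structural, and it is exactly the tension you flag yourself: $\xi_\nu$ sends weight $\beta$ to $w_0\beta$ (highest weight to lowest weight), whereas $*$ is weight-preserving, so no conjugation formula of the kind you want can hold; an MV polytope computation would refute your lemma, not establish it. What is actually needed is different: unwinding the \cite{HK1} definition $\sigma(a \otimes b) = \xi(\xi(b) \otimes \xi(a))$ on a highest weight element shows that $\sigma(b \otimes b_\mu)$ is the highest weight element of the irreducible component of $B(\mu) \otimes B(\lambda)$ whose \emph{lowest} weight element is $\xi_\mu(b_\mu) \otimes \xi_\lambda(b)$, so the theorem is equivalent to the assertion that the component with lowest weight element $\xi_\mu(b_\mu) \otimes \xi_\lambda(b)$ has highest weight element $b' \otimes b_\lambda$ with $\iota_\mu(b') = \iota_\lambda(b)^*$. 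That statement --- how $*$ mediates between the top and bottom of a single component of a tensor product --- is the real lemma proved in \cite{KT}, and your proposal contains neither it nor a viable strategy for it.
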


\section{Construction of the combinatorial Satake category}

\subsection{The combinatorial Satake category}
As discussed in the introduction, we will consider a semisimple category  $\CS $ whose simple objects are labelled by $ \lambda \in X_+ $ and where tensor product multiplicities are given by $ C_{\lambda \mu}^\nu = \Irr m_{\lambda \mu}^{-1}(t^\nu)$.

More precisely, an object $ R $ of $ \CS $ is a choice of set $ R_\lambda $ for each $ \lambda \in X_+ $.  Morphisms in the category are defined by
$$
\Hom(R, S) = \prod_{\lambda} \Hom(R_\lambda, S_\lambda).
$$
We define $ R \otimes S $ by
$$
(R \otimes S)_\nu = \bigcup_{\lambda, \mu} R_\lambda \times S_\mu \times C_{\lambda \mu}^\nu.
$$

We define an unbracketed triple tensor product by
$$
(R \otimes S \otimes T)_\delta = \bigcup_{\lambda, \mu, \nu} R_\lambda \times S_\mu \times T_\nu \times C_{\lambda \mu \nu}^\delta.
$$
Let $ A(\lambda) $ denote the object which assigns the set $ \{1\} $ to $ \lambda $ and $\emptyset$ to every other weight.  These $ A(\lambda) $ are the simple objects and if $ R $ is any object of $ \CS $, then we have an isomorphism $ R \cong \oplus_\lambda A(\lambda)^{\oplus |R_\lambda|}$.

\subsection{Associativity constraint}
Our goal is to define an associativity constraint for the category $ \CS $.  As explained in section \ref{se:moncat}, we will actually define two associators $ \alpha^1, \alpha^2 $.

From the definition of the category, it suffices to define isomorphisms
\begin{equation*}
\begin{gathered}
\label{eq:unbracket}
\alpha^1 :  A(\lambda) \otimes A(\mu) \otimes A(\nu) \rightarrow (A(\lambda) \otimes A(\mu)) \otimes A(\nu) , \\ \alpha^2 :  A(\lambda) \otimes A(\mu) \otimes A(\nu) \rightarrow A(\lambda) \otimes (A(\mu) \otimes A(\nu))
\end{gathered}
\end{equation*}

We will concentrate on the construction of $ \alpha^1 $ as $ \alpha^2 $ is similar.  The multiplicity of $ A(\delta) $ in the left hand side of $ \alpha^1 $ is given by
$$
\bigcup_\gamma C_{\lambda \mu}^\gamma \times C_{\gamma \nu}^\delta
$$
Thus, we must construct a bijection
$$
Q_{\lambda \mu \nu}^\delta : \Irr m_{\lambda \mu \nu}^{-1}(t^\delta) \rightarrow \bigcup_\gamma \Irr m_{\lambda \mu}^{-1}(t^\gamma) \times \Irr m_{\gamma \nu}^{-1}(t^\delta)
$$
Now, let us study $ m_{\lambda \mu \nu}^{-1}(t^\delta)$.  For each $ \gamma \in X_+$, we define $$ m_{\lambda \mu \nu}^{-1}(t^\delta)_\gamma = \{ (L_i) \in m_{\lambda \mu \nu}^{-1}(t^\delta) : L_2 \in \Gr^\gamma\}.$$  Clearly, this gives us a decomposition into locally closed subsets $$m_{\lambda \mu \nu}^{-1}(t^\delta) = \sqcup_\gamma m_{\lambda \mu \nu}^{-1}(t^\delta)_\gamma.  $$
For each irreducible component $ Y \in \Irr m_{\lambda \mu \nu}^{-1}(t^\delta) $, there exists some $ \delta $ such that $ Y \cap m_{\lambda \mu \nu}^{-1}(t^\delta)_\gamma $ is dense in $ Y $.  For this $ Y, \delta $, we see that $ Y \cap m_{\lambda \mu \nu}^{-1}(t^\delta)_\gamma $ is an irreducible component of $ m_{\lambda \mu \nu}^{-1}(t^\delta)_\gamma $.

Now this locus $ m_{\lambda \mu \nu}^{-1}(t^\delta)_\gamma$ can be described as a fibre product,
$$m_{\lambda \mu \nu}^{-1}(t^\delta)_\gamma = m_{\lambda \mu}^{-1}(\Gr^\gamma) \times_{\Gr^\gamma} m_{\gamma \nu}^{-1}(t^\delta). $$
Thus each irreducible component of $m_{\lambda \mu \nu}^{-1}(t^\delta)_\gamma $ of dimension $ \langle \lambda + \mu + \nu - \delta, \rho \rangle $ is of the form $ \widetilde Y_1 \times_{\Gr^\gamma} Y_2 $ for $ Y_1 \in C_{\lambda \mu}^\gamma$ and $ Y_2 \in C_{\gamma \nu}^\delta $.  Here we use that $$ \dim \widetilde Y_1 \times_{\Gr^\gamma} Y_2 = \langle \lambda + \mu + \gamma, \rho \rangle + \langle \gamma + \nu - \delta, \rho \rangle - \langle \gamma, 2 \rho \rangle = \langle \lambda + \mu + \nu - \delta, \rho \rangle. $$

Thus we define a bijection
\begin{equation*}
\begin{aligned}
Q_{\lambda \mu}^\nu : \Irr m_{\lambda \mu \nu}^{-1}(t^\delta) &\rightarrow \bigcup_\gamma \Irr m_{\lambda \mu}^{-1}(t^\gamma) \times \Irr m_{\gamma \nu}^{-1}(t^\delta) \\
Y &\mapsto (Y_1, Y_2)
\end{aligned}
\end{equation*}
where $ \gamma $ is chosen so that $ Y \cap m_{\lambda \mu \nu}^{-1}(t^\delta)_\gamma $ is dense in $ Y $ and $$  Y \cap m_{\lambda \mu \nu}^{-1}(t^\delta)_\gamma = \widetilde Y_1 \times_{\Gr^\gamma} Y_2.$$

To conclude, we have constructed a monoidal category $ \CS $ where the tensor product is defined using the sets $ C_{\lambda \mu}^\nu $.  (Actually, we have not checked the pentagon axiom, though it follows by considering 4-fold convolutions.  On the other hand, it also follows from the main theorem.)

\subsection{Commutativity constraint}
To define a commutativity constraint, we need to construct a bijection $ C_{\lambda \mu}^\nu \rightarrow C_{\mu \lambda}^\nu $.  In fact we will define an isomorphism $ m_{\lambda \mu}^{-1}(t^\nu) \rightarrow m_{\mu \lambda}^{-1}(t^\nu) $.  Our definition is inspired by Beilinson-Drinfeld \cite[5.3.8]{BD}.

Recall the automorphism $\psi $ of $ G $.  Extend $ \psi $ to an automorphism of $ G(\K) $ and of $ \Gr $.  Note that $ \psi(t^\lambda) = t^{-\lambda} $ and note that $ d(\psi(L_1), \psi(L_2)) = d(L_1, L_2)^* = d(L_2, L_1) $.

\begin{Proposition}
If $ L \in m_{\lambda \mu}^{-1}(t^\nu) $, then $ t^\nu \psi(L) \in m_{\mu \lambda}^{-1}(t^\nu) $ and this defines an isomorphism $ t^\nu \psi : m_{\lambda \mu}^{-1}(t^\nu) \rightarrow m_{\mu \lambda}^{-1}(t^\nu) $.
\end{Proposition}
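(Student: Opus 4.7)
The plan is to unwind the definitions and verify the two distance conditions using only the properties of $\psi$ that have already been recorded, namely $\psi(t^\nu) = t^{-\nu}$ and $d(\psi(L_1), \psi(L_2)) = d(L_2, L_1)$, together with the left $G(\K)$-invariance of $d$. An element of $m_{\lambda\mu}^{-1}(t^\nu)$ is really an intermediate point $L \in \Gr$ with $d(1,L) = \lambda$ and $d(L, t^\nu) = \mu$, since the second entry is forced to be $t^\nu$; and we want $t^\nu \psi(L)$ to satisfy $d(1, t^\nu \psi(L)) = \mu$ and $d(t^\nu \psi(L), t^\nu) = \lambda$.

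First I would compute $d(1, t^\nu \psi(L))$. By translation invariance this equals $d(t^{-\nu}, \psi(L)) = d(\psi(t^\nu), \psi(L))$, and the second listed property of $\psi$ rewrites this as $d(L, t^\nu) = \mu$. Then I would compute $d(t^\nu \psi(L), t^\nu) = d(\psi(L), 1) = d(\psi(L), \psi(1))$, which by the same property equals $d(1, L) = \lambda$. This shows $(t^\nu \psi(L), t^\nu)$ really lives in $\Gr^\mu \ttt \Gr^\lambda$ with image $t^\nu$ under $m_{\mu\lambda}$, so the assignment $L \mapsto t^\nu \psi(L)$ gives a well-defined map into $m_{\mu\lambda}^{-1}(t^\nu)$.

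To see that this map is an isomorphism of varieties, I would observe that it is an involution: applying it twice gives
\[
L \mapsto t^\nu \psi(L) \mapsto t^\nu \psi(t^\nu \psi(L)) = t^\nu \psi(t^\nu)\, \psi^2(L) = t^\nu t^{-\nu} L = L,
\]
since $\psi$ is involutive and a group automorphism. The same computation run on the $\mu\lambda$ side exhibits the inverse, so this is an algebraic isomorphism between the two fibres. There is no real obstacle; the only thing to be careful about is that the $*$ is essentially invisible here because the distances $\lambda, \mu$ have already been taken to be dominant in the correct slot, so applying $d(L_1, L_2)^* = d(L_2, L_1)$ uniformly lets one avoid any issue with how $*$ interacts with the dominance convention. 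Consequently one gets an induced bijection on top-dimensional irreducible components $C_{\lambda\mu}^\nu \to C_{\mu\lambda}^\nu$, which will serve as the commutor.
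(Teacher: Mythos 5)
Your proof is correct and takes essentially the same route as the paper: the same two distance computations, using $G(\K)$-invariance of $d$ and the identity $d(\psi(L_1),\psi(L_2)) = d(L_2,L_1)$, establish well-definedness. Your explicit check that the map squares to the identity merely spells out the paper's remark that $L \mapsto t^\nu \psi^{-1}(L)$ is the inverse (the two formulas agree since $\psi$ is involutive).
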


\begin{proof}
Since $L \in m_{\lambda \mu}^{-1}(t^\nu) $, we have that $ d(1, L) = \lambda $.  Thus $$d(t^\nu \psi(L), t^\nu) = d(\psi(L), 1) = \lambda. $$  A similar reasoning shows that $ d(1, t^\nu \psi(L)) = \mu $.  This proves the first part of the proposition.  For the second part, it is easy to see that $ L \mapsto t^\nu \psi^{-1}(L) $ is an inverse.
\end{proof}

Thus we have constructed the bijection $ C_{\lambda \mu}^\nu \rightarrow C_{\mu \lambda}^\nu $ as desired.

\section{The equivalence}

\subsection{Construction of the equivalence}
We begin by defining a functor $ \Phi : \CS \rightarrow G^\vee$-$\Crys $ as follows.

Recall that Braverman-Gaitsgory \cite{BG} have defined a crystal structure on the set of MV cycles for $ \Gr^\lambda $ and proved that this realizes the crystal $ B(\lambda) $.  Let us write $ MV(\lambda)_\mu :=  \Irr \Gr^\lambda \cap T^\mu $ for the  MV cycles of weight $ \mu $ and $ MV(\lambda) = \cup_\mu MV(\lambda)_\mu$.

We define the functor $ \Phi $ to take the object $ A(\lambda) $ in $ \CS $ to the crystal $ MV(\lambda) $.  More precisely, we define
$$
\Phi(R) := \bigcup_{\lambda \in X_+} R_\lambda \otimes MV(\lambda)
$$
where $ R_\lambda $ carries a trivial crystal structure.

Now we would like to extend $ \Phi $ to a monoidal functor.  To do this, we have to produce a natural transformation $ \phi_{R,S} : \Phi(R) \otimes \Phi(S) \rightarrow \Phi(R \otimes S) $.  Thus we need to construct an isomorphism of crystals
$$
(\bigcup_\lambda R_\lambda \otimes MV(\lambda)) \otimes (\bigcup_\mu S_\mu \otimes MV(\mu)) \rightarrow
\bigcup_\nu \bigcup_{\lambda, \mu} (R_\lambda \times S_\mu \times C_{\lambda \mu}^\nu) \otimes MV(\nu)
$$
For this it suffices to define for each $ \lambda, \mu, $ an isomorphism of crystals
$$
P_{\lambda \mu} : MV(\lambda) \otimes MV(\mu) \rightarrow \bigcup_\nu C_{\lambda \mu}^\nu \otimes MV(\nu)
$$

Fortunately, Braverman-Gaitsgory \cite[Theorem 3.2]{BG} have constructed such an isomorphism of crystals.  Following their paper, we recall that the map $P_{\lambda \mu} $ is defined as follows.  First, given $ Z_1 \in MV(\lambda)_\tau $ and $ Z_2 \in MV(\mu)_\delta $, we can form
$$
Z_1 \ttt Z_2 := \{ ([g], L_2) \in \Gr \times \Gr : [g] \in Z_1, g \in N_-(\K)t^\tau, g^{-1}L_2 \in Z_2 \}
$$
This is well-defined since $ Z_2 $ is invariant under $ N_-(\O)$.  Note that $Z_1 \ttt Z_2 \in MV(\lambda, \mu)_{\tau + \delta} $ and that this defines a bijection $ MV(\lambda) \times MV(\mu) \rightarrow MV(\lambda, \mu) $.

Now we will define $P_{\lambda \mu} $ by examining $ Z_1 \ttt Z_2 $ in a different way.  First choose $ \nu $ such that  that $ Z_1 \ttt Z_2 \cap m^{-1}(\Gr^\nu) $ is dense in $ Z_1 \ttt Z_2 $.  Then define $ P_{\lambda \mu}(Z_1, Z_2) = (Y, Z) \in C_{\lambda \mu}^\nu \times MV(\nu) $ so that $$ \widetilde Y \times_{\Gr^\nu} Z = (Z_1 \ttt Z_2) \cap m^{-1}(\Gr^\nu). $$

In a similar way we define
$$
P_{\lambda \mu \nu} : MV(\lambda) \otimes MV(\mu) \otimes MV(\nu) \rightarrow \bigcup_{\gamma} C_{\lambda \mu \nu}^\gamma \otimes MV(\gamma)
$$
by $ P_{\lambda \mu \nu}(Z_1, Z_2, Z_3) = (Y, Z) $ if $$ \widetilde Y \times_{\Gr^\gamma} Z = (Z_1 \ttt Z_2 \ttt Z_3) \cap m^{-1}(\Gr^\gamma) $$ (assuming that intersection is dense in $ Z_1 \ttt Z_2 \ttt Z_3 $).  The argument from \cite{BG} can be extended to show that $ P_{\lambda \mu \nu} $ is an isomorphism of crystals (it is easy to see that it is a bijection of sets), but this fact will actually follow from analysis in the next section.  Assuming for the moment that $P_{\lambda \mu \nu}$ is a map of crystals, then we see that it gives to an isomorphism $ \phi_{R,S,T} : \Phi(R) \times \Phi(S) \times \Phi(T) \rightarrow \Phi(R \otimes S \otimes T) $ for any three objects $R,S,T \in \CS $.

From the definition, it is obvious that $ \Phi $ is an equivalence of categories. Thus, Theorem \ref{th:main} comes down to the following result.
\begin{Theorem} \label{th:compat}
The pair $(\Phi, \phi) $ is compatible with associators and commutors.
\end{Theorem}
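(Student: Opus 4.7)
The plan is to verify the two compatibilities separately, and in each case to reduce to simple objects. Since $\CS$ is semisimple with simples $A(\lambda)$ and since $\phi$, $\Phi(\alpha^i)$, and $\Phi(\sigma)$ are all assembled block-by-block from their action on tensor products of simples, it suffices to chase diagram \eqref{eq:monfunc} on $A(\lambda), A(\mu), A(\nu)$ and diagram \eqref{eq:commutors} on $A(\lambda), A(\mu)$.

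For the associator, both legs of \eqref{eq:monfunc} take a triple $(Z_1, Z_2, Z_3) \in MV(\lambda) \times MV(\mu) \times MV(\nu)$ to an element of $\bigsqcup_{\gamma,\delta} (C^\gamma_{\lambda\mu} \times C^\delta_{\gamma\nu}) \otimes MV(\delta)$, and both are governed by the same geometric picture: the irreducible cycle $Z_1 \ttt Z_2 \ttt Z_3$ is stratified by $L_2 \in \Gr^\gamma$, and for generic $\gamma$ one takes its intersection with $m_{\lambda\mu\nu}^{-1}(t^\delta)_\gamma$. The definition of $Q$ decomposes this intersection as $\widetilde{Y_1} \times_{\Gr^\gamma} Y_2$, while the composition $P_{\gamma\nu} \circ (P_{\lambda\mu} \otimes 1)$ extracts exactly the same $Y_1$, $Y_2$, and residual MV cycle in $MV(\delta)$, by associativity of the fibre product. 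This settles $\alpha^1$; an identical argument handles $\alpha^2$. As a byproduct, because $P_{\lambda\mu}$ and $P_{\gamma\nu}$ are crystal maps by \cite{BG}, the identity forces $P_{\lambda\mu\nu}$ to be a crystal map as well, as was promised in the construction.

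For the commutator, a morphism of crystals is determined by its values on highest weight elements, so it suffices to check \eqref{eq:commutors} on $Z_1 \otimes Z^0_\mu$, where $Z^0_\mu = \{t^\mu\} \in MV(\mu)_\mu$ corresponds to $b_\mu$. The upper route applies $\sigma^{\Crys}$, which by Theorem \ref{th:peter} produces $Z'_1 \otimes Z^0_\lambda$ with $\iota_\mu(Z'_1) = \iota_\lambda(Z_1)^*$, and then $\phi_{\mu,\lambda}$ repackages this as an element of $\bigsqcup_\nu C^\nu_{\mu\lambda} \otimes MV(\nu)$. The lower route computes $\phi_{\lambda,\mu}(Z_1 \otimes Z^0_\mu)$ explicitly using the convolution $Z_1 \ttt \{t^\mu\} = \{([g], g t^\mu) : [g] \in Z_1\}$, obtaining some $(Y, Z) \in C^\nu_{\lambda\mu} \otimes MV(\nu)$, and then applies $\Phi(\sigma^{\CS})$, which replaces $Y$ by $t^\nu \psi(Y)$. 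Equating the two outputs reduces the theorem to the question of how the geometric involution $t^\nu \psi$ acts on MV-cycle data through $P$.

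The main obstacle is exactly this last identification: showing that $L \mapsto t^\nu \psi(L)$ on the fibres of the convolution corresponds, under $P_{\lambda\mu}$ and its inverse, to the Kashiwara involution $*$ of Theorem \ref{th:peter}. The structural indicator is encouraging: $\psi(\Gr^\lambda) = \Gr^\lambda$ while $\psi(T^\mu) = S^{-\mu}$, so $\psi$ interchanges the two families of semi-infinite orbits, just as $*$ interchanges the two realizations of an element of $B(\infty)$. To make this rigorous I would pass to the stable MV cycles modeling $B(\infty)$, show that $t^{\mathrm{wt}(Z)} \psi$ descends to a well-defined involution there that is compatible with the embeddings $\iota_\lambda : MV(\lambda) \hookrightarrow MV(\infty)$, and identify this involution with $*$ by matching its action on Kashiwara string data (or equivalently by checking that it exchanges the roles of $N(\K)$- and $N_-(\K)$-cells). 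Once this identification is in hand, tracing the two routes above and invoking Theorem \ref{th:peter} on highest weight elements yields the commutator compatibility, completing the proof of Theorem \ref{th:compat} and hence of Theorem \ref{th:main}.
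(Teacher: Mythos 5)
Your associator argument is essentially the paper's own: reduce to the simples $A(\lambda), A(\mu), A(\nu)$, stratify $Z_1 \ttt Z_2 \ttt Z_3$ according to where the intermediate lattices land, and observe that both legs of \eqref{eq:monfunc} decompose the same dense subset as an iterated fibre product, so that commutativity comes down to the identity $\widetilde{\widetilde{Y_1} \times_{\Gr^\delta} Y_2} \times_{\Gr^\gamma} Z = \widetilde{Y_1} \times_{\Gr^\delta} (\widetilde{Y_2} \times_{\Gr^\gamma} Z)$; the byproduct that $P_{\lambda \mu \nu}$ is thereby forced to be a crystal morphism is also exactly how the paper closes that loop. (A small point of care: one needs \emph{two} loci, $\{L_2 \in \Gr^\delta\}$ and $\{L_3 \in \Gr^\gamma\}$, simultaneously dense in $Z_1 \ttt Z_2 \ttt Z_3$; your write-up conflates the stratification of the cycle with the stratification $m_{\lambda\mu\nu}^{-1}(t^\delta)_\gamma$ of the fibre, but the underlying idea is the same.)

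The commutor half, however, has a genuine gap, and it sits precisely at the step you flag as ``the main obstacle.'' The fact you need --- that on the model $MV(\infty) = \bigcup_\gamma \Irr \overline{S^0 \cap T^\gamma}$ of $B(\infty)$, Kashiwara's involution is the geometric involution $Z^* = t^{\mathrm{wt}(Z)}\psi(Z)$ --- cannot be obtained by ``checking that it exchanges the roles of $N(\K)$- and $N_-(\K)$-cells'': many involutions swap the two families of semi-infinite cells, and $*$ is characterized by its precise interaction with the crystal operators (it conjugates the $e_i$ into the starred operators $e_i^*$). Verifying this property for $t^{\mathrm{wt}}\psi$ is the hard content of Braverman--Finkelberg--Gaitsgory \cite{BFG} (see also \cite{BaGa}), and the paper \emph{cites} this theorem rather than proving it; your proposal to ``match string data'' is in effect a plan to redo that work, not an argument. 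Likewise, ``compatibility with the embeddings $\iota_\lambda$'' requires an explicit formula, namely \cite[Proposition 4.3]{BaGa}: using $\Gr^\lambda \subset \overline{S^\lambda}$ one has $\iota_\lambda(Z) = t^{-\lambda}\overline{Z}$, where $\overline{Z}$ is the closure of $Z$ inside $\overline{S^\lambda \cap T^\mu}$. With these two inputs the rest is a two-line computation the paper carries out and your sketch omits: for $Z$ of weight $\nu - \mu$,
$$ \iota_\lambda(Z)^* = t^{\nu - \mu - \lambda}\psi(t^{-\lambda}\overline{Z}) = t^{\nu - \mu}\psi(\overline{Z}) = t^{-\mu}\overline{t^\nu \psi(Z)} = \iota_\mu(t^\nu\psi(Z)), $$
which together with Theorem \ref{th:peter} gives commutativity of \eqref{eq:commutors} on highest weight elements. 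Without invoking (or genuinely reproving) the results of \cite{BFG} and \cite{BaGa}, your argument does not close.
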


\subsection{Compatibility of associators}
We consider the diagram \eqref{eq:monfunc}.  We will just check this diagram for $ \alpha^1 $ as the one for $ \alpha^2 $ will follow by similar reasoning.  As usual it is enough to check this diagram when the three objects are $ A(\lambda), A(\mu), A(\nu) $.  Then applying the definitions we are reduced to the following diagram.
\begin{equation} \label{eq:assoc}
\xymatrix{
MV(\lambda) \times MV(\mu) \times MV(\nu) \ar[r]^{P_{\lambda \mu} \times I} \ar[d]^{P_{\lambda \mu \nu}} & \bigcup_\delta C_{\lambda \mu}^\delta \times MV(\delta) \times MV(\nu)  \ar[d]^{I \times P_{\delta \nu}} \\
\bigcup_\gamma C_{\lambda \mu \nu}^\gamma \times MV(\gamma) \ar[r]^{Q_{\lambda \mu \nu}^\gamma} & \bigcup_{\gamma} \bigcup_\delta C_{\lambda \mu}^\delta \times C_{\delta \nu}^\gamma \times MV(\gamma)
}
\end{equation}
This diagram is a slight modification of \eqref{eq:monfunc}; we have omitted the associator in the crystal category and split the composition $  \phi \circ (\phi \otimes I) $ into two maps.

Let us now check the commutativity of the diagram.  Let $ (Z_1, Z_2, Z_3) \in MV(\lambda) \times MV(\mu) \times MV(\nu)$.  The commutativity of this diagram depends on alternate descriptions of the convolution $Z_1 \ttt Z_2 \ttt Z_3$.  First, let us choose $ \delta, \gamma $ such that the two loci
$$ \{ (L_i) \in Z_1 \ttt Z_2 \ttt Z_3 : L_2 \in \Gr^\delta \} \text{ and } \{ (L_i) \in Z_1 \ttt Z_2 \ttt Z_3 : L_3 \in \Gr^\gamma \}$$
are both dense in $ Z_1 \ttt Z_2 \ttt Z_3$.  For simplicity of notation below, we write $ D_1, D_2 $ for these two dense subsets of $ Z_1 \ttt Z_2 \ttt Z_3 $.

By definition $ P_{\lambda \mu \nu}(Z_1, Z_2, Z_3) = (Y, Z) $ where $ D_1  = \widetilde Y \times_{\Gr^\gamma} Z$.  Also, $ Q_{\lambda \mu \nu}^\gamma(Y) = (Y_1, Y_2) $ where $$ Y \cap m_{\lambda \mu \nu}^{-1}(t^\gamma)_\delta = \widetilde{Y_1} \times_{\Gr^\delta}  Y_2.$$  Thus following the diagram down and right takes $ (Z_1, Z_2, Z_3) $ to $ (Y_1, Y_2, Z) $ where
$$ D_1 \cap D_2 = \widetilde{ \widetilde{Y_1} \times_{\Gr^\delta} Y_2} \times_{\Gr^\gamma} Z $$

On the other hand following the diagram right and then down gives $(Y'_1, Y'_2, Z') $ where
$$
D_1 \cap D_2 = \widetilde{Y'_1} \times_{\Gr^{\delta}} (\widetilde{Y'_2} \times_{\Gr^{\gamma}} Z')
$$

However, examining the definitions we see that $$ \widetilde{ \widetilde{Y_1} \times_{\Gr^\delta} Y_2} \times_{\Gr^\gamma} Z = \widetilde{Y_1} \times_{\Gr^\delta} (\widetilde{Y_2} \times_{\Gr^\gamma} Z)$$
and thus we conclude that $ (Y_1, Y_2, Z) = (Y'_1, Y'_2, Z') $ as desired.

Note that we never used that $ P_{\lambda \mu \nu} $ is a crystal morphism, though it now follows (since the rest of the maps in \eqref{eq:assoc} are crystal morphisms).

\subsection{Compatibility of commutors}
We must check the commutativity of the following diagram.
\begin{equation*}
\xymatrix{
MV(\lambda) \times MV(\mu) \ar[r]^{\sigma_{MV(\lambda), MV(\mu)}} \ar[d]^{P_{\lambda \mu}} & MV(\mu) \times MV(\lambda) \ar[d]^{P_{\mu \lambda}} \\
\bigcup_\nu C_{\lambda \mu}^\nu \times MV(\nu) \ar[r]^{t^\nu \psi \times I} & \bigcup_\nu C_{\mu \lambda}^\nu \times MV(\nu) \\
}
\end{equation*}

Since all the maps in this diagram are crystal morphisms, it suffices to check the commutativity of this diagram on highest weight elements.  So let us take a highest weight element $ (Z, \{t^\mu\}) $.  We see that it suffices to prove the following result.

\begin{Proposition}
The action of the crystal commutor on this highest weight element is given by $ \sigma(Z, \{t^\mu \}) = (t^\nu\psi(Z), \{t^\lambda\}) $ where $ Z $ has weight $ \nu - \mu $.
\end{Proposition}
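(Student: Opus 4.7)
The plan is to invoke Theorem \ref{th:peter} and then verify the resulting geometric identity in $B(\infty)$. By Theorem \ref{th:peter}, the crystal commutor acts on highest weight elements as $\sigma(Z \otimes \{t^\mu\}) = Z' \otimes \{t^\lambda\}$, where $Z' \in MV(\mu)_{\nu - \lambda}$ is characterized by $\iota_\mu(Z') = \iota_\lambda(Z)^*$ in $B(\infty)$. Thus the Proposition reduces to identifying $Z'$ with $t^\nu\psi(Z)$, which amounts to establishing
\[
\iota_\mu\bigl(t^\nu\psi(Z)\bigr) = \iota_\lambda(Z)^*
\]
in the crystal $B(\infty)$.

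A preliminary observation is that $t^\nu\psi(Z)$, viewed as a subset of $\Gr$, determines an MV cycle in $MV(\mu)_{\nu-\lambda}$ (up to Zariski closure): this uses the highest weight condition on $(Z, \{t^\mu\})$ together with the isomorphism $t^\nu\psi : m_{\lambda\mu}^{-1}(t^\nu) \to m_{\mu\lambda}^{-1}(t^\nu)$ of the preceding Proposition, plus the fact that MV cycles can be recovered from either $T$-cell or $S$-cell intersections via appropriate closures in Schubert varieties (in the spirit of Anderson's theorem on MV cycles). One also checks that under the correspondence $MV(\lambda)_{\nu-\mu} \leftrightarrow C_{\lambda\mu}^\nu$ supplied by $P_{\lambda\mu}$ on highest weight elements, the set-theoretic $t^\nu\psi$ matches the $t^\nu\psi$ acting on $C_{\lambda\mu}^\nu$, so the formula in the Proposition is indeed well-defined.

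For the main identity, I would use explicit geometric realizations of both $\iota_\lambda$ and $*$ on MV cycles. The embedding $\iota_\lambda: MV(\lambda) \hookrightarrow MV(\infty)$ is implemented by a translation that shifts the MV polytope of $Z$ by $-\lambda$, sending $Z \subset \overline{\Gr^\lambda}\cap T^\tau$ to an MV cycle of weight $\tau - \lambda$ in $\overline{S^0}\cap T^{\tau - \lambda}$. The Kashiwara involution $*$ on $MV(\infty)$, via the MV polytope picture, corresponds to negation of the polytope through its center; geometrically this is realized by a map of the form $W \mapsto t^{-\mathrm{wt}(W)}\psi(W)$ (or a variant, depending on the precise convention). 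Substituting these descriptions, both sides of the desired identity become explicit translates of $\psi(Z)$, and a direct coweight-lattice calculation, tracking the $-\lambda$ shift from $\iota_\lambda$, the weight-dependent shift from $*$, the translation by $t^\nu$ in the combinatorial commutor, and the $-\mu$ shift from $\iota_\mu$, shows that the two sides coincide. The main obstacle is fixing down the precise geometric formulas for $\iota_\lambda$ and $*$ in the MV cycle model and then carrying out the bookkeeping of translations; once these ingredients are in place the identity follows.
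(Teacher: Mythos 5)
Your proposal follows the same route as the paper's proof: reduce via Theorem \ref{th:peter} to the identity $\iota_\mu(t^\nu\psi(Z)) = \iota_\lambda(Z)^*$ in $B(\infty)$, realize $\iota_\lambda$ and the Kashiwara involution geometrically on MV cycles, and finish with bookkeeping of translations. The paper pins down the two ingredients by citation: $\iota_\lambda(Z) = t^{-\lambda}\overline{Z}$ (Proposition 4.3 of \cite{BaGa}) and $W^* = t^{\mathrm{wt}(W)}\psi(W)$ on $MV(\infty) = \bigcup_\gamma \Irr \overline{S^0 \cap T^\gamma}$ (from \cite{BFG}). Your preliminary observation that $t^\nu\psi(Z)$ is a well-defined element of $MV(\mu)_{\nu-\lambda}$ is a point the paper leaves implicit, so that part is fine.

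The one concrete flaw is the sign in your formula for the involution: you wrote $W \mapsto t^{-\mathrm{wt}(W)}\psi(W)$. Since $\psi(S^0) = T^0$ and $\psi(T^\gamma) = S^{-\gamma}$, the set $t^{-\gamma}\psi(W)$ lies in $\overline{T^{-\gamma} \cap S^{-2\gamma}}$, which is not of the form $\overline{S^0 \cap T^\delta}$ for any $\delta$, so this map does not even land in $MV(\infty)$; only $t^{+\mathrm{wt}(W)}\psi(W)$ does, and that choice is also the one forced by your own (correct) polytope description of $*$ as reflection through the center. The sign is not a harmless convention here, because the whole proof is a sign-sensitive translation computation: with your stated sign one gets $\iota_\lambda(Z)^* = t^{\mu + 2\lambda - \nu}\psi(\overline{Z})$, which does not match $\iota_\mu(t^\nu\psi(Z)) = t^{\nu-\mu}\psi(\overline{Z})$, so the identity you need would fail. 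With the correct sign the bookkeeping is two lines, exactly as in the paper: $\iota_\lambda(Z)^* = t^{\nu - \mu - \lambda}\psi(t^{-\lambda}\overline{Z}) = t^{\nu-\mu}\psi(\overline{Z}) = t^{-\mu}\,\overline{t^\nu\psi(Z)} = \iota_\mu(t^\nu\psi(Z))$, using $\mathrm{wt}(\iota_\lambda(Z)) = \nu - \mu - \lambda$ and $\psi(t^{-\lambda}) = t^{\lambda}$. So your plan is sound and is the paper's plan, but the hedge ``or a variant, depending on the precise convention'' sits exactly on the step that carries all the content of the proof, and the convention you actually wrote down makes the argument break.
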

\begin{proof}
In light of Theorem \ref{th:peter}, it suffices to show that $$ \iota_\lambda(Z)^* = \iota_{\mu}(t^\nu \psi(Z)). $$

Let
$$
MV(\infty) = \bigcup_\gamma \Irr \overline{S^0 \cap T^\gamma}
$$
where $ \gamma $ ranges over the negative root cone.  By Braverman-Finkelberg-Gaisgory \cite{BFG}, there is a crystal structure on $ MV(\infty) $ making it isomorphic to the crystal $ B(\infty) $.  Moreover, they show that Kashiwara's involution is given by $ Z^* = t^\gamma \psi(Z)$ (see also \cite{BaGa}).

The map $ \iota_\lambda : B(\lambda) \rightarrow B(\infty) $ can be realized in this model as follows.  First, we note that because $ \lambda $ is dominant, there is an inclusion $ \Gr^\lambda \subset \overline{S^\lambda} $ and thus $ \Gr^\lambda \cap T^\mu \subset \overline{S^{\lambda} \cap T^\mu} $.   Moreover, these two varieties have the same dimension.  Thus if $ Z \in \Irr \Gr^\lambda \cap T^\mu$, we can form $ \overline{Z} \in \Irr \overline{S^{\lambda} \cap T^\mu} $.  We then multiply by $ t^{-\lambda} $ which defines an isomorphism between $ \overline{S^\lambda \cap T^\mu} $ and $ \overline{S^0 \cap T^{\mu - \lambda}} $.  By Proposition 4.3 of Baumann-Gaussent \cite{BaGa}, we have $ \iota_\lambda(Z) = t^{-\lambda} \overline{Z} $.

Now let us take $ Z $ as in the statement of the Proposition.  By hypothesis $ Z \in MV(\lambda)_{\nu - \mu} $.  Thus, we see that $$ \iota_\lambda(Z)^* = t^{\nu - \mu - \lambda} \psi(t^{-\lambda} \overline{Z}) = t^{\nu - \mu} \psi(\overline{Z}). $$

On the other hand, $$\iota_{\mu}(t^\nu \psi(Z)) = t^{-\mu}\overline{t^\nu \psi(Z)} = t^{\nu - \mu} \psi(\overline{Z}).$$
Thus the result follows.
\end{proof}

This completes the proof of Theorem \ref{th:compat} and thus Theorem \ref{th:main}.

\end{document}